\let\on=\operatorname
\newcommand{\ud}{\,\mathrm{d}}
\newcommand{\R}{\ensuremath{\mathbb{R}}}
\newtheorem{proposition}{Proposition}
\newtheorem{definition}{Definition}
\newtheorem{corollary}{Corollary}
\newtheorem{lemma}{Lemma}
\newtheorem{assumption}{Assumption}
\begin{document}

%

%

\author{Adrien Vacher$^{\star \circ}$ \\ 
        \and François-Xavier Vialard$^{\star \circ}$ }

\date{
{\small 
		    $^\star $ Laboratoire Informatique Gaspard Monge
Univ Gustave Eiffel, CNRS, LIGM, F-77454 Marne-la-Vallée, France \\
			$^\circ$ INRIA Paris, 2 rue Simone Iff, 75012, Paris, France  \\
		\texttt{ adrien.vacher@u-pem.fr, francois-xavier.vialard@u-pem.fr}
}\\
}

\title{Stability and upper bounds for statistical estimation of unbalanced transport potentials.}

\maketitle

\begin{abstract}
In this note, we derive upper-bounds on the  statistical estimation rates of unbalanced optimal transport (UOT) maps for the quadratic cost. Our work relies on the stability of the semi-dual formulation of optimal transport (OT) extended to the unbalanced case. Depending on the considered variant of UOT, our stability result interpolates between the OT (balanced) case where the semi-dual is only locally strongly convex with respect the Sobolev semi-norm $\dot{H}^1$ and the case where it is locally strongly convex with respect to the $H^1$ norm. When the optimal potential belongs to a certain class $C$ with sufficiently low metric-entropy, local strong convexity enables us to recover super-parametric rates, faster than $1/\sqrt{n}$.
\end{abstract}

\section{Introduction}
In its original formulation, OT is a tool to compare probability distributions: it seeks a map that optimally transports one distribution $\mu$ to an other distribution $\nu$ with respect to some fixed cost $c$ and it returns the associated transport cost. This problem was later relaxed into a linear program by Kantorovitch and its primal formulation consists into seeking a coupling instead of a map with minimal cost and whose marginals are constrained to be $\mu$ and $\nu$; under suitable assumptions on the measures and the cost, this relaxation is tight \citep{Brenier1991}. Quite recently, OT was extended to arbitrary positive measures \citep{chizat2017unbalanced}, with possibly different masses, thus the name Unbalanced Optimal Transport (UOT). On the primal problem, the hard marginal constraints are relaxed by soft entropic penalties. From an applied point of view, the mass constraint relaxation is indeed a desirable feature: it allows not only displacement of mass but also local growth or shrinkage \citep{sejourne2019sinkhorn}. In image processing for instance, it can remove or at least decreases blurred areas in favor of sharper contrasts \cite{feydy2017}. From a statistical point of view, UOT may appear as a more robust version of OT as it is able to cut down the outliers. 
At the heart of classical OT, rather than the mere OT distance value, the main object of interest is the transport map: in generative imaging, we use the transport map to generate images from noise \citep{liu2019wasserstein}, for point cloud approximation, the particle flow is driven by the maps \citep{merigot2021non} and in Domain Adaptation, the source distribution is transported on the target using an OT map \citep{courty2017domain}.
Novel applications include predicting the evolutions of cells from measurements  \cite{schiebinger2019optimal,Yang2020PredictingCL}.
Notably in the case of a quadratic cost, Brenier showed that these maps are linked with the dual formulation of Kantorovitch relaxation: the map from $\mu$ to $\nu$ corresponds to the the gradients of the first variable of the dual problem that we shall refer to as a \textit{potential}. This potential is the solution of a linear program, yet with an infinite dimensional constraint. Hence, even in the case where the measures $\mu$ and $\nu$ are known analytically, there is in general no closed form to compute the OT potentials. In particular, recent methods instead rely on $n$-samples empirical counterparts of $\mu$ and $\nu$ to statistically estimate the cost and the potentials \citep{genevay2016stochastic, seguy2018large}. 
Such methods are thus concerned with the statistical estimation of optimal transport quantities, such as the cost or the potentials, see \cite{2019Panaretos,bookPanaretos} for an overview of this rapidly growing field.
If the distributions $\mu$, $\nu$ are only assumed to have a density w.r.t. the Lebesgue measure, the error achieved by the $\textit{plug-in}$ estimator, which is simply the OT between the empirical samples, scales in $O(n^{-\frac{2}{d}})$ \citep{chizat2020faster}. In particular, without further assumptions the OT problem is said to suffer the curse of dimension. However, in the seminal work of \citet{hutter2021Minimax}, the authors showed that if the original potential is $\alpha$-smooth, then its gradient could be at best estimated in $n^{-\frac{2(\alpha+1)}{2\alpha + d}}$ with respect to the squared $L^2$ distance. They also provided an estimator that actually achieved this rate of estimation, hence providing a \textit{minimax} rate under smoothness assumptions. Yet, we emphasize the fact that their estimator as such in infeasible as it requires in particular to project on the space of convex, $k$-times differentiable functions. This result has triggered follow-ups on computationally feasible and efficient estimators of the optimal transport maps, for instance \cite{muzellec2021nearoptimal} leveraging the underlying smoothness of the optimal maps and \cite{pooladian2021entropic,2022arXiv220208919P,2021arXiv210701718D} using entropic regularization.

In this note, we propose to explore the results of statistical estimation of transport potentials to the unbalanced setting and derive upper-bounding rates. In particular, we restrict ourselves to probability measures instead of positive measures, yet it does not affect the relevance of using UOT instead of classical OT as it allows to handle outliers. Instead of making explicit smoothness assumptions on the ground truth, we assume that it belongs to a certain class $C$ and derive rates of estimation depending on the complexity of $C$, namely its metric entropy. In particular, our statistical analysis relies on an unbiased estimation, where our search space for the empirical candidate is the same set $C$, that is assumed to contain the ground truth. Using the recent regularity results on Unbalanced Optimal transport \citep{gallouet2021regularity}, we shall in particular cover the case where the smoothness assumption is not directly made on the potential $z_0$ but instead on the measures $\mu$, $\nu$. As in the balanced case, we rely on the semi-dual formulation of UOT for which we derive stability results; interestingly, thanks to the extra convexity brought by the entropic relaxation of the marginal in some cases, we do not need to assume smoothness of the potentials to derive those stability estimates. In the case where the metric entropy of $C$ slowly diverges, the strong convexity enables us to use localization arguments and derive super-parametric rates. In particular, we obtain two different regimes that depend on the metric entropy of $C$; under smoothness assumptions, our rates closely match those of \citet{hutter2021Minimax} in the highly smooth case. 

\paragraph{Assumptions and notations} In this paper $X, Y$ are compact subsets of $\mathbb{R}^d$, $\mu$ and $\nu$ are positive measures over $X$ and $Y$ respectively with their $n$-independent samples empirical counterparts $\hat{\mu}$, $\hat{\nu}$ when $\mu$ and $\nu$ are probability measures. We shall denote by $\text{supp}(\mu)$, $\text{supp}(\nu)$ the support of $\mu$ and $\nu$ respectively. We shall denote by $\langle \cdot, \cdot \rangle$ the pairing between radon measures and continuous functions and by $q$ the quadratic function $q(x) = \frac{1}{2}\|x \|^2$. The notation $\|\cdot \|_{L^p_Z}$ for $p \in [1, + \infty ]$ shall refer to the $L^p$ norm over functions defined on Borel sets $Z$ as $\|f\|_{L^p_Z} = (\int_Z |f(x)|^p  \ud x)^{\frac{1}{p}}$. Conversely, for a probability measure $\beta$, we shall denote for $p \in [1, + \infty ]$, $\|g\|_{L^p(\beta)} = (\int_{x} |g(x)|^p \ud \beta(x))^{\frac{1}{p}}$.

\section{Unbalanced Optimal Transport}
In this section, we present unbalanced optimal transport via primal and dual formulations. The latter is used to show stability estimates, generalizing standard strong convexity estimates for the semi-dual in optimal transport. 
Unbalanced optimal transport (UOT) is a generalization of standard optimal transport which relaxes the marginal constraints using a convex divergence between positive measures. 
The primal formulation of UOT uses Csiz\'ar divergences which are defined as follows.
\begin{definition}[Csiz\'ar divergences]
Let $F: \R_+ \mapsto \R_+ \cup \{+\infty \}$ be a convex lower semicontinuous function such that $F(1) = 0$. Its recession constant is $F^{'}_{\infty} = \lim_\infty \frac{F(r)}{r}$.
Let $\mu,\nu$ be non negative Radon measures on a convex domain $\Omega$ in $\R^d$. The {\it Csiszàr divergence} associated with $F$ is
\begin{equation}
D_F(\mu,\nu) = \int_\Omega F\left(\frac{\ud \mu(x)}{\ud \nu(x) }\right) \ud \nu(x) + F^{'}_{\infty} \int_\Omega\ud \mu^{\perp}\,,
\end{equation}
where $\mu^{\perp}$ is the orthogonal part of the Lebesgue decomposition of $\mu$ with respect to $\nu$.
\end{definition}
The Kullback-Leibler divergence is obtained for $F(x) = x\log(x) - x + 1$.
The primal formulation of UOT is defined by, for
$\rho_0,\rho_1 \in \mathcal{M}_+(\Omega)$, 
\begin{equation*}\label{EqPrimalUOT}
    \on{UOT}(\rho_0,\rho_1) = \inf_{\gamma \in \mathcal{M}_+(X\times Y)} D_{F_{0}}(\gamma_0,\rho_0) + D_{F_1}(\gamma_1,\rho_1) + \int_{X\times Y} c(x,y) \ud \gamma(x,y)\,,
\end{equation*}
where $F_0,F_1$ are two possibly different entropy functions. Note that standard OT is recovered for the entropy function $F(x) = \iota_{\{1\}}(x)$ the convex indicator function of $\{ 1\}$. 

The optimization problem associated with $\on{UOT}$ is convex and its dual formulation reads, denoting $F^*$ the Legendre transform of $F$,
\begin{equation}\label{EqDualUOTProblem}
    \sup_{z_0,z_1 \in C_b(X), C_b(Y)} -\int_X F_0^*(-z_0(x)) \ud \rho_0(x) -\int_Y F_1^*(-z_1(y)) \ud \rho_1(y)
\end{equation}
under the constraint \begin{equation}\label{EqInequalityConstraint}
    z_0(x) + z_1(y) \leq c(x,y)\,.
\end{equation}
The following proposition shows that at optimality, $z_0$ is a standard optimal transport potential between modified versions of $(\mu, \nu)$ for the cost $c(x,y)$.
\begin{proposition}[see Lemma 3 in \citet{gallouet2021regularity}] Assume that $\varphi^*$ is differentiable on its domain. At optimality of \eqref{EqDualUOTProblem}, the pair $(z_0, z_1)$ reads $z_1 = z_0^c \coloneqq \inf_{x} c(x, \cdot ) - z_0(x)$ and $z_0$ is an optimal transport potential between $\tilde{\mu} = \partial F_0^*(-z_0) \mu$ and $\tilde{\nu} = \partial F_1^*(-z_1) \nu$. 
\end{proposition}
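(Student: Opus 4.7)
The plan has two parts: first reduce the pair $(z_0,z_1)$ to a single variable via $c$-transform, then identify $z_0$ as an OT potential by matching the primal--dual optimality conditions.

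First, I would argue that at optimality $z_1 = z_0^c$. Since $F_1$ is defined on $\R_+$ with $F_1(1)=0$, its Legendre transform $F_1^*$ is nondecreasing, so $s \mapsto -F_1^*(-s)$ is nondecreasing as well. Consequently, for $z_0$ fixed, the objective in \eqref{EqDualUOTProblem} is a nondecreasing functional of $z_1$ on the feasible set, while the pointwise constraint \eqref{EqInequalityConstraint} reads $z_1(y) \leq \inf_{x}(c(x,y)-z_0(x)) = z_0^c(y)$. The pointwise maximizer is therefore $z_1 = z_0^c$, which is in $C_b(Y)$ by compactness of $X$ and continuity of $c$. The symmetric argument, or a second application to $z_0$, then shows that the dual reduces to an optimization over a single $c$-concave potential $z_0$.

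Next, I would obtain the claimed identification using strong duality and the Fenchel--Young equality. Under the stated differentiability assumption on $F^*$ (so that $\partial F_0^*$ and $\partial F_1^*$ are single-valued on their domains), the KKT/Fenchel system of the convex primal--dual pair
\begin{equation*}
\inf_\gamma D_{F_0}(\gamma_0,\mu) + D_{F_1}(\gamma_1,\nu) + \int c\, \ud\gamma \quad \longleftrightarrow \quad \sup_{z_0,z_1}\, -\!\!\int F_0^*(-z_0)\ud\mu - \!\!\int F_1^*(-z_1)\ud\nu
\end{equation*}
yields, at any primal--dual optimum $(\gamma^\star, z_0, z_1)$, the primal-marginal identifications
\begin{equation*}
\frac{\ud \gamma_0^\star}{\ud \mu}(x) = \partial F_0^*(-z_0(x)), \qquad \frac{\ud \gamma_1^\star}{\ud \nu}(y) = \partial F_1^*(-z_1(y)),
\end{equation*}
so that the marginals of $\gamma^\star$ are exactly $\tilde{\mu}$ and $\tilde{\nu}$.

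Finally, once the marginals are fixed to $\tilde\mu, \tilde\nu$, the remaining primal problem is the standard Monge--Kantorovich problem $\inf_{\gamma \in \Pi(\tilde\mu,\tilde\nu)} \int c\, \ud\gamma$, whose dual is the classical Kantorovich dual with potentials constrained by \eqref{EqInequalityConstraint}. The complementary slackness on the inequality constraint $z_0(x)+z_1(y)\leq c(x,y)$ (tight on $\on{supp}(\gamma^\star)$) together with $z_1 = z_0^c$ from the first step exactly says that $z_0$ solves that classical dual, i.e.\ it is an OT potential between $\tilde{\mu}$ and $\tilde{\nu}$. The main technical point is justifying strong duality and the Fenchel--Young equalities in the measure-theoretic setting (in particular handling the orthogonal part through the recession constant $F'_\infty$), but this is now standard for $D_F$-penalized transport problems and can be cited from the UOT literature; the differentiability hypothesis on $F^*$ eliminates any ambiguity coming from set-valued subdifferentials.
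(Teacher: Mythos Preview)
The paper does not supply its own proof of this proposition; it simply cites Lemma~3 in \citet{gallouet2021regularity}. So there is nothing internal to compare against at the level of argument structure.

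Your sketch follows the standard route one expects in that reference: (i) monotonicity of $-F_1^*(-\cdot)$ forces $z_1=z_0^c$ at optimality; (ii) Fenchel--Young equality at a primal--dual optimum identifies the marginals of the optimal plan as $\tilde\mu,\tilde\nu$; (iii) complementary slackness on the constraint $z_0\oplus z_1\leq c$ together with $z_1=z_0^c$ shows $(z_0,z_1)$ is a Kantorovich pair for the balanced problem between $\tilde\mu$ and $\tilde\nu$. This is correct in outline. The only points you flag but do not carry out---strong duality for the UOT pair and handling the singular part via $F'_\infty$---are indeed the technical content of the cited lemma, so deferring them to the literature is consistent with what the paper itself does. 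One small remark: your monotonicity step uses that $F_1$ is supported on $\R_+$, which is exactly the hypothesis in the paper's Definition~1, so that is fine; just be aware that if $F_1^*$ is constant on an interval (e.g.\ the balanced case $F=\iota_{\{1\}}$, $F^*(s)=s_+$ for $s\leq 0$), the maximizer in $z_1$ need not be unique pointwise, and one then \emph{chooses} $z_1=z_0^c$ among the maximizers rather than deducing it is forced.
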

In the rest of the paper, we shall from now on focus assume that the cost is quadratic $c(x, y) = q(x-y)$. Furthermore, to avoid heavy notations, we shall assume $F_0 = F_1 \coloneqq \varphi$. However, similar results can be obtained when the entropy functions are different.

It is possible to optimize Formula \eqref{EqDualUOTProblem} with respect to the second variable to obtain the so-called \emph{semi-dual} formulation of UOT. Indeed, the optimal $z_1$ is the $c$-conjugate of $z_0$. Using this argument on $z_0^c$, one can further assume that $z_0$ is the $c$-conjugate of a function $z_1$, which says that  $\tilde{z}_0 = q - z_0$ is convex. In this case, the semi-dual UOT problem can be read as
\begin{equation}\label{EqDualUOTProblemBrenier}
   - \inf_{z \in C_b(X), z \text{ convex}} \langle \varphi^*(z - q), \mu \rangle  + \langle \varphi^*(z^* - q), \nu \rangle \, ,
\end{equation}
where we injected $(q - z)^c = q - z^*$. Thus, let us introduce
\begin{definition}[Semi-Dual UOT]
Given nonnegative measures $\mu,\nu$, the UOT semi-dual is defined by
\begin{equation}
    J_{\mu, \nu}(z) = \langle \varphi^*(z - q), \mu \rangle  + \langle \varphi^*(z^* - q), \nu \rangle \, .
\end{equation}
\end{definition}
When confusion is possible, we shall denote $J_{\mu, \nu}$ by $J$. This semi-dual objective $J$ remains convex and even gains in convexity with respect to the original objective. This phenomenon is well-known in standard OT and we show how it extends in the unbalanced setting. The important difference with standard OT is that when $\varphi^*$ is strongly convex, the stability is expressed in an $H^1$ norm instead of the $L^2$ norm of the gradient.


\begin{proposition}[Stability estimate]
\label{propStability}
The semi-dual functional $J$ is convex. Assume that $\varphi^*$ is differentiable and that $\nu$ is absolutely continuous with repsect to the Lebesgue measure. Let $z$ a $\lambda$-strongly convex potential and the optimal potential $z_0$. Then, it holds
\begin{equation}
J(z) - J(z_0) \geq \frac{1}{2\lambda} \mathbb{E}_{\tilde{\nu}}[\| \nabla (z_0^* - z^*)\|^2] + C_{z^*} \mathbb{E}_{\nu}[(z_0^* - z^*)^2] +  C_z \mathbb{E}_{\mu}[(z_0 - z)^2] \,,
\end{equation}
with nonnegative constants $C_{z}$ and $C_{z^*}$ depends on $\varphi^*$ and $z$. If $z$ and $z^*$ and uniformly bounded on the support of $\mu$ and $\nu$ respectively and if $\varphi^*$ is strongly convex on every compact, then $C_z$ and $C_{z^*}$ are uniformly lower bounded by a constant $C' > 0$.
\end{proposition}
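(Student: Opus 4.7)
The plan is to treat the two summands of $J$ separately: a quadratic Taylor-type lower bound coming from pointwise strong convexity of $\varphi^*$ on the compact range of its arguments will produce the two $L^2$ quadratic terms, while the remaining first-order residuals are then combined via the optimality of $z_0$ and the strong convexity of $z$ to produce the gradient term. Convexity of $J$ is a preliminary observation: $z \mapsto z - q$ is affine, $z \mapsto z^* - q$ is convex (as a pointwise supremum of affine functions of $z$), and $\varphi^*$ is convex and nondecreasing (as the Legendre transform of a convex function on $\mathbb{R}_+$), so composition preserves convexity in both integrands.

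Decomposing
\begin{equation*}
J(z) - J(z_0) = \int [\varphi^*(z-q) - \varphi^*(z_0 - q)]\,\mathrm{d}\mu + \int[\varphi^*(z^* - q) - \varphi^*(z_0^* - q)]\,\mathrm{d}\nu,
\end{equation*}
I would apply to each integrand the inequality $\varphi^*(a) - \varphi^*(b) \geq (\varphi^*)'(b)(a-b) + C (a-b)^2$, where $2C$ is a lower bound of $(\varphi^*)''$ on any compact containing the range of values. This supplies the two terms $C_z\,\mathbb{E}_\mu[(z_0-z)^2]$ and $C_{z^*}\mathbb{E}_\nu[(z_0^* - z^*)^2]$; the uniform lower bound $C' > 0$ then follows immediately from strong convexity of $\varphi^*$ on every compact once $z$ and $z^*$ are uniformly bounded on the supports. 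What remains are the first-order residuals $I_1 = \int (z - z_0)\,\mathrm{d}\tilde\mu$ and $I_2 = \int (z^* - z_0^*)\,\mathrm{d}\tilde\nu$, where we recognise $\tilde\mu = (\varphi^*)'(z_0 - q)\mu$ and $\tilde\nu = (\varphi^*)'(z_0^* - q)\nu$ as exactly the modified measures of the preceding proposition.

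The main obstacle is to show $I_1 + I_2 \geq \tfrac{1}{2\lambda}\mathbb{E}_{\tilde\nu}[\|\nabla(z_0^* - z^*)\|^2]$, which is the genuine UOT analogue of the classical Brenier semi-dual stability. For this I would invoke first-order optimality of $z_0$ in the form ${\nabla z_0}_\sharp \tilde\nu = \tilde\mu$ (a reformulation of Lemma~3 of \citet{gallouet2021regularity} in the present convention), which lets me rewrite $I_1 = \int (z - z_0)(\nabla z_0^*(y))\,\mathrm{d}\tilde\nu(y)$ so that both residuals integrate against the same measure. Combining the Fenchel--Young equality $z_0(\nabla z_0^*(y)) + z_0^*(y) = \langle y, \nabla z_0^*(y)\rangle$ with the strong convexity of $z$ applied between the two argmaxima $\nabla z^*(y)$ and $\nabla z_0^*(y)$ yields the pointwise bound $(z - z_0)(\nabla z_0^*(y)) + (z^* - z_0^*)(y) \geq \tfrac{1}{2\lambda}\|\nabla(z_0^* - z^*)(y)\|^2$, after which integration against $\tilde\nu$ concludes. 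The two delicate points will be (i) ensuring that $\nabla z_0^*$ is defined $\tilde\nu$-almost everywhere and genuinely transports $\tilde\nu$ onto $\tilde\mu$ -- which is exactly why absolute continuity of $\nu$ is imposed -- and (ii) justifying the pointwise Taylor bound with a constant that is uniform on the range of values of $z - q$ and $z^* - q$, which the uniform boundedness of $z, z^*$ together with strong convexity of $\varphi^*$ on every compact handle.
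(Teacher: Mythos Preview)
Your proposal is correct and follows essentially the same route as the paper: apply a pointwise second-order lower bound for $\varphi^*$ to extract the two $L^2$ terms, recognise the first-order remainder as the balanced semi-dual gap $\langle z-z_0,\tilde\mu\rangle+\langle z^*-z_0^*,\tilde\nu\rangle$ for the modified measures, and then use balanced OT stability for the gradient term. The only cosmetic difference is that the paper invokes the balanced stability estimate as a black box, whereas you unfold it explicitly via the pushforward $(\nabla z_0^*)_\sharp\tilde\nu=\tilde\mu$ (note the typo in your text: you wrote $\nabla z_0$ where $\nabla z_0^*$ is meant) together with Fenchel--Young and $\lambda$-strong convexity of $z$.
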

\begin{proof}
We start by applying the convexity inequality
\begin{equation}
    \varphi^*(y)  - \varphi^*(x) \geq {\varphi^{*}}'(x) (y - x) + \frac 12 m_{xy} |x -y|^2\,,
\end{equation}
where $m_{xy} \geq 0$. We apply it in the difference of dual values and we get
\begin{align*}\label{EqDifferences}
    J(z) - J(z_0) \geq & \langle z - z_0, (\varphi^{*})'(z_0 - q) \mu \rangle +  \langle z^* - z_0^*, (\varphi^{*})'(z_0^* - q) \nu \rangle \\
     & + C_z \langle (z_0 - z)^2, \mu \rangle + C_{z^*} \langle (z_0^* - z^*)^2, \nu \rangle \,,
\end{align*}
where 
\begin{equation}
    \begin{cases}
    C_z = \underset{x \in \text{supp}(\mu)}{\inf}  m_{z(x) - q(x), z_0(x) - q(x)}\\
    C_{z^*} = \underset{y \in \text{supp}(\nu)}{\inf}  m_{z^*(y) - q(y), z_0^*(y) - q(y)} \, .
    \end{cases} 
\end{equation}
Now, recall that $z_0$ is an optimal potential for the transport of measure $\tilde \mu \coloneqq {\varphi^{*}}'(z_0-q)\mu$ onto the measure $\tilde \nu \coloneqq {\varphi^{*}}'(z_0^*-q)\nu$. Denoting $\tilde{J}(z) = \langle z, \tilde{\mu} \rangle + \langle z^*, \tilde{\nu} \rangle$, Equation \eqref{EqDifferences} reads 
\begin{equation}\label{EqDifferences}
    J(z) - J(z_0) \geq \tilde{J}(z) - \tilde{J}(z_0) +  C_z \langle (z_0 - z)^2, \mu \rangle + C_{z^*} \langle (z_0^* - z^*)^2, \nu \rangle  \,.
\end{equation}
We now apply the stability of optimal transport guaranteed by the absolute continuity of $\nu$ which gives the lower bound $ \tilde{J}(z) - \tilde{J}(z_0) \geq \frac{1}{2\lambda}  \mathbb{E}_{\tilde{\nu}}[\|\nabla(z^* - z^*_0)\|^2]$.

\end{proof}

Note that this upper-bound encompasses the balanced case and that the entropy can bring extra convexity. In the generic case, we shall write $J(z) - J(z_0) \geq d_{H^\circ}^\lambda(z, z_0)$ where the pseudo distance $d_{H^\circ}^\lambda$ is defined as
\begin{equation*}
\begin{split}
      d_{H^\circ}^\lambda(f, g)^2 \coloneqq ~ & \frac{1}{2 \lambda}\mathbb{E}_{\tilde{\nu}}[\| \nabla (f^* - g^*)\|^2] +  C' (\mathbb{E}_{\nu}[(f^* - g^*)^2] + \mathbb{E}_{\mu}[(f-g)^2]) \, ,
\end{split}
\end{equation*}
where $C'\geq 0$. When $C'$ is strictly positive, $J$ is (locally, at the optimum) strongly convex with respect to an $H^1$ norm while in the balanced setting, there it is formulated with the semi-norm $\dot{H}^1$. Furthermore, when $C'>0$, $J(z) - J(z_0)$  not only controls $z_0^* - z^*$ in an $L^2$ sense but also $z_0 - z$ in an $L^2$ sense. This is a notable improvement with respect to balanced OT: in this case, in order to upper-bound $z-z_0$ with $J(z) - J(z_0)$, a smoothness assumption must be made on $z_0$; in the UOT case, if the entropy is sufficiently convex, we can obtain an upper-bound \textit{without} this extra smoothness assumption. In the next section, we use this property to apply a localization technique and obtain fast rates without requiring smoothness of the functions in $C$.

\section{Estimation of UOT maps}

In this section, we restrict ourselves to the case where $\mu, \nu$ are probability measures which we only access through their $n$-samples stochastic counterparts $\hat{\mu}, \hat{\nu}$. Even though this setting is much more restrictive than the original UOT setting where $\mu$ and $\nu$ can be arbitrary positive radon measures, it remains nonetheless a relevant setting in ML applications for instance. Indeed, the relaxation of the hard marginal OT constraint by a divergence allow to better handle outliers as shown experimentally by \citet{mukherjee2021outlier}.

In this stochastic setting, a natural way to estimate UOT map is to solve the empirical semi-dual over a given search space $C$. 

\begin{definition}[Stochastic Semi-Dual Unbalanced OT]
Let $C$ be a set of real-valued function, we define $\widehat{\text{UOT}}_C$
\begin{equation}
\label{EqEmpiricalUOT}
    \widehat{\text{UOT}}_C = - \inf_{z \in C} \hat{J}(z) \, ,
\end{equation}
where $\hat{J} = J_{\hat{\mu}, \hat{\nu}}$. Conversely,we define the empirical potential
\begin{equation}
    \hat{z}_C = \arg \min_{z \in C} \hat{J}(z) \, .
\end{equation}
When no confusion is possible, we shall simply denote it $\hat{z}$.
\end{definition}

If the true unbalanced potential $z_0$ belongs to $C$, we can prove that the empirical potential $\hat{z}$ converges toward $z_0$ with respect to $d_{H^\circ}^\lambda$ at a rate that will depend on the complexity of $C$. 

\subsection{Generic case}

We show that under suitable assumptions, the solutions of empirical unbalanced semi-dual OT converges toward the ground truth in the $d_{H^\circ}^\lambda$ sense.
\begin{assumption}
\label{assump:compact}
The measures $\mu$, $\nu$ have support included in $B_R$, where $B_r$ is the euclidean ball of $\mathbb{R}^d$ centered in $0$ and of radius $r$.
\end{assumption}
\begin{assumption}
\label{assump:boundedness}
The measures $\mu$, $\nu$ have densities with respect to the Lebesgue measure on $B_R$.
\end{assumption}
\begin{assumption}
\label{assump:z_0}
There exists $\tilde{z}_0 \in C$ such that $\tilde{z}_0$ coincides with $z_0$ on $\text{supp}(\mu)$ and with $\tilde{z}_0^*$ coincides with $z_0^*$ on $\text{supp}(\nu)$.
\end{assumption}
\begin{assumption}
\label{assump:C}
The functions in $C$ are uniformly bounded by $M(r)$ over $B_r$, uniformly lower bounded by $l$ and are $\lambda$-strongly convex.
\end{assumption}

The goal of Assumption \ref{assump:boundedness} is to ensure the existence of the unbalanced transport map between $\mu$, $\nu$. The goal of Assumption \ref{assump:z_0} is to ensure the absence of bias in the model. We believe that under a finer analysis, Assumption \ref{assump:compact} could be replaced with sub-gaussian measures. We show in the following Lemma that Assumption \ref{assump:C} ensures that the conjugate of the functions in $C$ are both bounded and smooth on every ball. 

\begin{lemma}
\label{lemma:boundC^*}
For all $z$ that are $\lambda$-strongly convex and such that $z\geq l$, $\|z\|_{L^\infty_{B_r}} \leq M(r)$, we have $\|\nabla z^*\|_{L^\infty_{B_r}} \leq G(r) \coloneqq  \frac{r}{\lambda} + \sqrt{\frac{2(M(0) - l)}{\lambda}}$ and $\| z^* \|_{L^\infty_{B_r}} \leq M'(r) \coloneqq r G(r) + M(G(r))$.
\end{lemma}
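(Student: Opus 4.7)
The plan is to leverage two standard facts about Legendre duality under strong convexity: if $z$ is $\lambda$-strongly convex then $z^*$ is $C^1$ with $\nabla z^*$ being $\tfrac{1}{\lambda}$-Lipschitz, and moreover $\nabla z^*(0)$ equals the (unique) minimizer $x_0$ of $z$. Granted these, the bound on $\nabla z^*$ over $B_r$ follows by controlling $\|x_0\|$ and then propagating via the Lipschitz estimate, while the bound on $z^*$ itself follows from the envelope formula $z^*(y) = \langle y, \nabla z^*(y)\rangle - z(\nabla z^*(y))$.

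First I would bound the minimizer $x_0$ of $z$. By $\lambda$-strong convexity evaluated at $x_0$ and the origin, $z(0) \geq z(x_0) + \tfrac{\lambda}{2}\|x_0\|^2$. Since $z(x_0) \geq l$ and $z(0) \leq M(0)$ by the hypotheses on $C$, rearranging yields $\|x_0\|^2 \leq \tfrac{2(M(0)-l)}{\lambda}$, that is $\|x_0\| \leq \sqrt{2(M(0)-l)/\lambda}$. Next, for any $y \in B_r$, the $\tfrac{1}{\lambda}$-Lipschitz property of $\nabla z^*$ gives
\begin{equation*}
\|\nabla z^*(y)\| \leq \|\nabla z^*(0)\| + \tfrac{1}{\lambda}\|y\| \leq \sqrt{\tfrac{2(M(0)-l)}{\lambda}} + \tfrac{r}{\lambda} = G(r),
\end{equation*}
which is the first claim.

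For the second claim I would use that, since $z$ is strongly convex and proper, the supremum in $z^*(y) = \sup_x \{\langle x,y\rangle - z(x)\}$ is attained at $x^\star = \nabla z^*(y)$, so
\begin{equation*}
    z^*(y) = \langle y, \nabla z^*(y)\rangle - z(\nabla z^*(y)).
\end{equation*}
The first term is bounded by $\|y\|\,\|\nabla z^*(y)\| \leq r\, G(r)$ via Cauchy--Schwarz and the bound just established. For the second term, $\nabla z^*(y) \in B_{G(r)}$ implies $|z(\nabla z^*(y))| \leq M(G(r))$ by the uniform $L^\infty_{B_{G(r)}}$ control from Assumption \ref{assump:C}. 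Combining yields $|z^*(y)| \leq rG(r) + M(G(r)) = M'(r)$, as required.

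The proof is essentially routine convex analysis; the only mildly delicate step is recognizing that the bound on $\|x_0\|$ decouples from $r$, so that the dependence on $r$ enters only through the $\tfrac{1}{\lambda}$-Lipschitz shift, giving an additive rather than multiplicative dependence on $r$. This is what makes the clean form of $G(r)$ possible and avoids the looser quadratic-inequality bound one would obtain by working directly from strong convexity at a generic $y$.
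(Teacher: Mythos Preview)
Your proof is correct and follows essentially the same route as the paper: the $\tfrac{1}{\lambda}$-Lipschitz property of $\nabla z^*$, the identification $\nabla z^*(0)=\arg\min z$ together with the strong-convexity inequality at the origin to bound the minimizer, and finally the envelope identity $z^*(y)=\langle y,\nabla z^*(y)\rangle - z(\nabla z^*(y))$ for the bound on $z^*$.
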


\begin{proof}
For $z \in C$, we have that $z^*$ is $\frac{1}{\lambda}$-smooth. In particular, for $x \in B_r$
\begin{align}
    \| \nabla z^* (x) \| & = \| \nabla z^*(x) - \nabla z^*(0) + \nabla z^*(0) \| \\
    & \leq \| \nabla z^*(x) - \nabla z^*(0)\| + \| \nabla z^*(0) \| \\
    & \leq \frac{r}{\lambda} +  \| \nabla z^*(0) \| \, .
\end{align}
Now recall that $\nabla z^*(0) = \arg \min_{x \in \mathbb{R}^d} z(x) $. Since $z$ is $\lambda$-strongly convex, we have the following inequality
\begin{equation}
    z(0) \geq z(x_*) + \frac{\lambda}{2} \|x_*\|^2 \, ,
\end{equation}
where $x_* = \arg \min_{x \in \mathbb{R}^d} z(x)$. Using that $z(0) \leq M(0)$ and $-z \leq -l$, we recover
\begin{equation}
    \|x_*\| \leq \sqrt{\frac{2(M(0) - l)}{\lambda}} \, .
\end{equation}
The bound on $\| z^* \|_{L^\infty_{B_r}}$ follows the definition of the Fenchel-Legendre transform
\begin{equation}
    z^*(x) = x^\top \nabla z^*(x) - z(\nabla z^*(x)) \, .
\end{equation}
\end{proof}

Finally, combined with the previous Lemma, Assumption \ref{assump:C} also ensures that the conjugate on $C$ has a Lipschitz behavior with respect to the sup norm. 

\begin{lemma}
\label{lemma:lipfenchel} 
Let $z_1, z_2$ be $\lambda$-strongly convex functions such that $z_1, z_2$ are lower-bounded by $l$ and bounded by $M(r)$ on $B_r$. We have $\| z_1^* - z_2^* \|_{L^\infty_{B_R}} \leq \| z_1 - z_2 \|_{L^\infty_{B_{G(R)}}}$, where $G(r) \coloneqq \frac{r}{\lambda} + \sqrt{\frac{2(M(0) - l)}{\lambda}}$ as in Lemma \ref{lemma:boundC^*}.
\end{lemma}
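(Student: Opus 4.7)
The plan is to reduce the sup-norm bound on the conjugates to a sup-norm bound on the original functions by restricting the Fenchel supremum to the ball $B_{G(R)}$, where the maximizers are guaranteed to live.

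First, I would fix $y \in B_R$ and write $z_i^*(y) = \sup_{x \in \mathbb{R}^d} (\langle x,y\rangle - z_i(x))$ for $i = 1,2$. Since each $z_i$ is $\lambda$-strongly convex (hence coercive), the supremum is attained at the unique point $x_i^\star = \nabla z_i^*(y)$. By Lemma \ref{lemma:boundC^*} applied to $z_i$, we have $\|\nabla z_i^*(y)\| \leq G(R)$ for every $y \in B_R$, so $x_i^\star \in B_{G(R)}$. Consequently, without changing the value of the Legendre transform on $B_R$, we may restrict the sup to the compact set $B_{G(R)}$:
\begin{equation*}
    z_i^*(y) = \sup_{x \in B_{G(R)}} \bigl( \langle x,y\rangle - z_i(x) \bigr), \qquad y \in B_R.
\end{equation*}

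Next, I would use the elementary fact that taking a sup over a common set is $1$-Lipschitz with respect to the uniform norm of the integrand. Concretely, for any $y \in B_R$ and any $x \in B_{G(R)}$,
\begin{equation*}
    \langle x,y\rangle - z_1(x) \leq \langle x,y\rangle - z_2(x) + \|z_1 - z_2\|_{L^\infty_{B_{G(R)}}}.
\end{equation*}
Taking the sup over $x \in B_{G(R)}$ on both sides gives $z_1^*(y) - z_2^*(y) \leq \|z_1 - z_2\|_{L^\infty_{B_{G(R)}}}$, and by symmetry the same bound holds with the roles of $z_1, z_2$ swapped. Taking the sup over $y \in B_R$ yields the claim.

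There is no real obstacle here beyond invoking Lemma \ref{lemma:boundC^*} to confine the maximizers to $B_{G(R)}$; the rest is the standard $1$-Lipschitz property of the sup operator on a common domain. The only point deserving care is to verify that the hypotheses of Lemma \ref{lemma:boundC^*} (strong convexity, lower bound $l$, bound $M(r)$ on $B_r$) apply uniformly to both $z_1$ and $z_2$ with the same constant $G(R)$, which is immediate from the assumptions of the present lemma.
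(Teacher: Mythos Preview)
Your proof is correct and follows essentially the same approach as the paper: both use Lemma~\ref{lemma:boundC^*} to confine the Fenchel maximizers $\nabla z_i^*(y)$ to $B_{G(R)}$, then compare the two conjugates on that ball. The paper evaluates the inequality directly at the point $\nabla z_2^*(x)$ rather than phrasing it as the $1$-Lipschitz property of the sup, but this is a purely cosmetic difference.
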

\begin{proof}
Let $x \in B_R$. By definition of the Fenchel transform, we have for all $y \in \mathbb{R}^d$
\begin{equation}
    z_1^*(x) \geq x^\top y - z_1(y) \, ,
\end{equation}
with equality when $y = \nabla z_1^*(x)$. Hence, we have for all $y$
\begin{equation}
    z_1^*(x) - z_2^*(x) \geq x^\top y - z_1(y) + z_2(\nabla z_2^*(x)) - x^\top\nabla z_2^*(x) \, .
\end{equation}
In particular, for $y = \nabla z_2^*(x)$, we obtain
\begin{equation}
    z_1^*(x) - z_2^*(x) \geq  z_2(\nabla z_2^*(x)) - z_1(\nabla z_2^*(x)) \, ,
\end{equation}
and applying Lemma \ref{lemma:boundC^*} yields $z_1^*(x) - z_2^*(x)\geq - \|z_1 - z_2 \|_{L^\infty_{B_{G(R)}}}  $. Conversely, flipping the role of $z_1, z_2$, we obtain
\begin{equation}
    z_2^*(x) - z_1^*(x) \geq  z_1(\nabla z_1^*(x)) - z_2(\nabla z_1^*(x)) \, ,
\end{equation} 
which yields $|z_1^*(x) - z_2^*(x)| \leq \|z_1 - z_2 \|_{L^\infty_{B_G(R)}}$.
\end{proof}

We have now all the ingredients to derive our result.

\begin{proposition}
\label{propErrormaps}
Denoting $\hat{z}_C$ the solution of problem \eqref{EqEmpiricalUOT}, we have under Assumptions 1-4, if the unbalanced optimal transport potential $z_0$ between $\mu$ and $\nu$ belongs to $C$, then we have for all $\delta \leq \frac{M'}{L}$
\begin{equation*}
    \mathbb{E}[d_{H^\circ}^\lambda(\hat{z}_C, z_0)^2] \lesssim \delta + \frac{1}{\sqrt{n}} \int_{\frac{\delta}{4}}^{\frac{M'}{L}} \sqrt{n(C, L^\infty_{B_{R'}}, Lu)} \mathrm{d} u \, 
\end{equation*}
where $n(C, \| \cdot \|, u)$ is the logarithm of the covering number, also called the \textit{metric entropy}, of $C$ with respect to the $\| \cdot \|$ (semi)-norm at scale $u$, $M'=(M, R, \lambda, l, \varphi)$, $R' = (M, R, \lambda, l)$, $L = (M, R, \lambda, l, \varphi)$ and $\lesssim$ hides a factor $64$. 
\end{proposition}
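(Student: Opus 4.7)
The proof plan combines the stability estimate of Proposition 1 with a standard empirical process argument, using the boundedness/Lipschitz structure on $C$ provided by Lemmas 1 and 2.

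First, I would write down the basic inequality. Since $\hat{z}_C$ minimizes $\hat{J}$ over $C$ and $z_0 \in C$ by Assumption 3, we have $\hat{J}(\hat{z}_C) \leq \hat{J}(z_0)$; combined with Proposition 1 applied to $\hat{z}_C$, this yields
\[
d_{H^\circ}^\lambda(\hat{z}_C, z_0)^2 \leq J(\hat{z}_C) - J(z_0) \leq (J - \hat{J})(\hat{z}_C) - (J - \hat{J})(z_0).
\]
Taking expectations reduces the task to controlling the expected fluctuation of the centered empirical process $z \mapsto (J - \hat{J})(z) - (J - \hat{J})(z_0)$ indexed by $z \in C$.

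Second, I would show that this process is Lipschitz in an appropriate sup norm on $C$. Assumption 4 and Lemma 1 imply that both $z$ and $z^*$, for $z \in C$, stay uniformly bounded on $B_R$ (by $M$ and by $M'(R)$ respectively), and their gradients on $B_R$ stay bounded. Consequently the arguments of $\varphi^*$ in $J$ live in a fixed compact set depending only on $(M, R, \lambda, l)$, on which $\varphi^{*\prime}$ is bounded by some $L = L(M, R, \lambda, l, \varphi)$. Combined with the conjugate-Lipschitz estimate of Lemma 2 (which turns $L^\infty_{B_{G(R)}}$-closeness of $z_1, z_2$ into $L^\infty_{B_R}$-closeness of $z_1^*, z_2^*$), one gets that $z \mapsto \varphi^*(z-q)+\varphi^*(z^*-q)$ is $L$-Lipschitz in $\|\cdot\|_{L^\infty_{B_{R'}}}$ with $R'$ as in the statement, and uniformly bounded by $M'$.

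Third, I would invoke symmetrization together with Dudley's chaining tail inequality on the empirical process. Because the summands are bounded and Lipschitz in the $L^\infty_{B_{R'}}$ metric on $C$, the covering numbers of the function class used in the chaining are majorised by $n(C, L^\infty_{B_{R'}}, \cdot/L)$. A standard peeling/localisation at scale $\delta$ truncates the Dudley integral from below: the sub-$\delta$ behaviour is absorbed into a $\delta$ term (coming from covering $C$ at that scale plus Lipschitz control within each ball), while the macroscopic part produces the integral $\tfrac{1}{\sqrt{n}} \int_{\delta/4}^{M'/L}\!\sqrt{n(C, L^\infty_{B_{R'}}, Lu)}\,\ud u$. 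Combining with the stability inequality gives the desired bound.

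The main obstacle is the fact that $J$ contains the Fenchel conjugate $z^*$, which is the nontrivial composition that must be controlled uniformly in $z\in C$; without Lemma 2 the chaining metric would be ill-behaved. A secondary technical point is tracking the constants through the two compositions $z \mapsto z^* \mapsto \varphi^*(z^*-q)$ so that the final Lipschitz constant depends only on $(M, R, \lambda, l, \varphi)$ and does not blow up with $n$, which is exactly what Lemmas 1 and 2 are tailored to provide.
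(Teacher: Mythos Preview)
Your proposal is correct and follows essentially the same route as the paper: the basic inequality from stability plus optimality of $\hat z_C$, then a Dudley-type chaining bound on the empirical process, with Lemmas~1 and~2 used to pass the covering numbers of $\{\varphi^*(z-q)\}$ and $\{\varphi^*(z^*-q)\}$ back to those of $C$ in $L^\infty_{B_{R'}}$. The only cosmetic difference is that the paper does not center the process at $z_0$ but instead splits off the term $\hat J(z_0)-J(z_0)$ (which has zero mean) and bounds the two remaining suprema separately, citing \citet[Theorem~16]{Luxburg04Distance} for the truncated Dudley integral; your ``symmetrization + Dudley + truncation at $\delta$'' description is exactly that result, though calling the $\delta$-truncation ``peeling/localisation'' is slightly loose terminology (genuine localisation is what happens later in Proposition~\ref{PropFastRates}).
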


\begin{proof}

We start by applying the strong convexity inequality of the semi-dual and the optimality conditions 
\begin{align}
    d_{H^\circ}^\lambda(\hat{z}, z_0)^2 & \leq J(\hat{z}) - J(z_0) \\
      &  = J(\hat{z}) - \hat{J}(\hat{z}) + \hat{J}(\hat{z}) - \hat{J}(z_0)
      + \hat{J}(z_0) - J(z_0) \, .
\end{align}
Using Assumption \ref{assump:z_0}, the term $\hat{J}(\hat{z}) - \hat{J}(z_0)$ is negative hence we have
\begin{align}
    d_{H^\circ}^\lambda(\hat{z}, z_0)^2 & \leq J(\hat{z}) - \hat{J}(\hat{z}) + \hat{J}(z_0) - J(z_0) \\ 
    & \label{eq:Cterm} \leq \sup_{z \in C} \langle \phi^*(z - q), \mu - \hat{\mu} \rangle \\
    & \label{eq:C^*term} ~ + \sup_{z \in C^*} \langle \phi^*(z - q), \nu - \hat{\nu} \rangle  \\
    & \label{eq:z_0term} ~ + \hat{J}(z_0) - J(z_0) \, ,
\end{align}
where we denoted $C^* = \{z^*, z \in C\}$.
\paragraph{Bound on term \eqref{eq:Cterm}} Denoting $C_0 = \{\phi^*(g - q), g \in C \}$, we apply \citet[Theorem 16] {Luxburg04Distance} to bound our empirical process 
\begin{equation*}
    W \coloneqq \sup_{z \in C} \langle \phi^*(z - q), \mu - \hat{\mu} \rangle \, ,
\end{equation*}
and we obtain for all $\delta > 0$
\begin{equation}
    \mathbb{E}[W] \leq 2\delta + \frac{4\sqrt{2}}{\sqrt{n}} \int_{\frac{\delta}{4}}^\infty \sqrt{n(C_0, L^2(\hat{\mu}), u)} \ud u \, .
\end{equation}
Noting that $\|g\|_{L^2(\hat{\mu})} \leq \|g\|_{L^\infty(\mu)}$ almost surely, we recover the upper bound
\begin{equation}
    \mathbb{E}[W] \leq 2\delta + \frac{4\sqrt{2}}{\sqrt{n}} \int_{\frac{\delta}{4}}^\infty \sqrt{n(C_0, L^\infty(\mu), u)} \ud u \, .
\end{equation}
Since the functions in $C$ are uniformly bounded by $M(R)$ on $B_R$ and that $\mu$ is supported on $B_R$, we have $\forall (g_1, g_2) \in C^2$,
\begin{equation}
    \| \phi^*(g_1 - q) - \phi^*(g_2 - q)\|_{L^\infty(\mu)} \leq L^1_{\phi^*} \| g_1 - g_2 \|_{L^\infty(\mu)} 
    \, ,
\end{equation}
where $L^1_{\phi^*}$ is defined as
\begin{equation}
\label{eqDefL^1}
    L^1_{\phi^*} \coloneqq \sup_{x \in [- M_1, M_1]} |\partial \phi^*(x) | \, ,
\end{equation}
and $M_1 = 2M(R) + R^2$. In particular, we get the new upper-bound for all $\frac{\delta}{4} \leq \frac{2M(R)}{L^1_{\phi^*}}$
\begin{align*}
    \mathbb{E}[W] & \leq 2 \delta + \frac{4\sqrt{2}}{\sqrt{n}} \int_{\frac{\delta}{4}}^{\frac{2M(R)}{L^1_{\phi^*}}} \sqrt{n(C, L^\infty(\mu), L^1_{\phi^*}u)} \ud u \\
    & \leq 2 \delta + \frac{4\sqrt{2}}{\sqrt{n}} \int_{\frac{\delta}{4}}^{\frac{2M(R)}{L^1_{\phi^*}}} \sqrt{n(C, L^\infty_{B_R}, L^1_{\phi^*}u)} \ud u \, .
\end{align*}

\paragraph{Bound on term \eqref{eq:C^*term}} Lemma \ref{lemma:boundC^*} ensures that the functions in $C^*$ are uniformly bounded on every ball $B_r$ by some constant $M'(r)$. In particular, we can proceed as in the last paragraph and obtain
\begin{equation*}
    \mathbb{E}[W^*] \leq 2 \delta + \frac{4\sqrt{2}}{\sqrt{n}} \int_{\frac{\delta}{4}}^{\frac{2M'(R)}{L^2_{\phi^*}}} \sqrt{n(C^*, L^\infty_{B_R}, L^2_{\phi^*}u)} \ud u \, ,
\end{equation*}
where $W^* \coloneqq \sup_{z \in C^*} \langle z, \nu - \hat{\nu} \rangle$ and $L^2_{\phi^*}$ is defined as
\begin{equation}
\label{eqDefL^2}
    L^2_{\phi^*} := \sup_{x \in [-M_2, M_2]} |\partial \phi^*(x) | \, ,
\end{equation}
with $M_2 = 2M'(R) + R^2$. Using Lemma \ref{lemma:lipfenchel} that states 
\begin{equation}
    \| z_1^* - z_2^* \|_{L^\infty_{B_R}} \leq \| z_1 - z_2 \|_{L^\infty_{B_{G(R)}}} \, ,
\end{equation}
for some constant $G(R)$, we can control the covering number of $C^*$ with respect to the $L^\infty_{B_R}$ and we have the upper-bound for $\frac{\delta}{4} \leq \frac{2M'(R)}{L^2_{\phi^*}}$
\begin{equation*}
     \mathbb{E}[W^*] \leq 2 \delta + \frac{4\sqrt{2}}{\sqrt{n}} \int_{\frac{\delta}{4}}^{\frac{2M'(R)}{L^2_{\phi^*}}} \sqrt{n(C, L^\infty_{B_{G(R)}}, L^2_{\phi^*}u)} \ud u \, .
\end{equation*}

\paragraph{Final upper bound} Since the term \eqref{eq:z_0term} is zero in average, we obtain our final bound
\begin{equation*}
    d_{H^\circ}^\lambda(\hat{z}, z_0)^2 \leq 4 \delta + \frac{8\sqrt{2}}{\sqrt{n}} \int_{\frac{\delta}{4}}^{\frac{M'}{L}} \sqrt{n(C, L^\infty_{B_{R'}}, Lu)} \mathrm{d} u \, ,
\end{equation*}
where $M' = 2 \max(M(R), M'(R))$ and $L = \max(L^1_{\phi^*}, L^2_{\phi^*})$
\end{proof}

Leveraging the recent regularity results on UOT derived in \citet{gallouet2021regularity}, we can deduce from Proposition \ref{propErrormaps} an upper-bound for the statistical estimation of UOT potentials. 

\begin{corollary}
\label{corolLowSmooth}
Assume that $\mu$ and $\nu$ have compact and convex support with densities $(f,g)$ bounded away from zero and infinity and assume that $\varphi$is strictly convex with infinite slope at $0$. If $(f,g)$ are $k$-times continuously differentiable with $k\in \mathbb{N}^\star$ then, denoting $z_0$ an optimal unbalanced OT potential, there exists $C$ such that the empirical potential $\hat{z}_C$ verifies
\begin{equation}
    \begin{cases}
    \mathbb{E}[d_{H^\circ}^\lambda(\hat{z}_C, z_0)^2] \lesssim n^{-\frac{k+2}{d}} \text{ if $k + 2 < d/2$} \, , \\
    \mathbb{E}[d_{H^\circ}^\lambda(\hat{z}_C, z_0)^2] \lesssim \frac{\log(n)}{\sqrt{n}} \text{ ~if $k + 2  = d/2$} \, , \\
    \mathbb{E}[d_{H^\circ}^\lambda(\hat{z}_C, z_0)^2] \lesssim \frac{1}{\sqrt{n}} \text{~~~~ if $k + 2 > d/2$} \, .
    \end{cases}
\end{equation}
\end{corollary}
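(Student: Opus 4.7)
The plan is to reduce Corollary~\ref{corolLowSmooth} to Proposition~\ref{propErrormaps} by exhibiting an appropriate class $C$ whose metric entropy we can control, then balance the two terms in the bound
\[ \mathbb{E}[d_{H^\circ}^\lambda(\hat{z}_C,z_0)^2] \lesssim \delta + \frac{1}{\sqrt{n}} \int_{\delta/4}^{M'/L} \sqrt{n(C,L^\infty_{B_{R'}},Lu)}\,\mathrm{d}u. \]
The natural candidate for $C$ is a suitably large ball inside a Hölder/Sobolev space of smoothness $\alpha := k+2$, intersected with the $\lambda$-strongly-convex, lower- and upper-bounded functions required by Assumption~\ref{assump:C}.

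The first step is the regularity reduction: by the main regularity theorem of \citet{gallouet2021regularity}, under the stated hypotheses (strictly convex $\varphi$ with infinite slope at $0$, densities $f,g$ bounded away from $0$ and $\infty$, $C^k$, on compact convex supports), the optimal potential $z_0$ gains the two expected degrees of regularity over the data, so that $z_0$ and $z_0^*$ lie in a Hölder ball of order $\alpha = k+2$ on a neighborhood of the supports. Moreover $z_0$ is globally $\lambda$-strongly convex (with some $\lambda>0$) and $\tilde{z}_0 := q - z_0$ behaves like a Brenier potential, so one can extend $z_0$ off $\mathrm{supp}(\mu)$ keeping it $\lambda$-strongly convex and bounded, producing a function in our class $C$ that satisfies Assumption~\ref{assump:z_0}. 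I would define $C$ as the set of $\lambda$-strongly-convex functions on $\mathbb{R}^d$ with values in $[l,M(R)]$ on $B_R$ whose restriction to $B_{G(R)}$ is in a Hölder ball of radius $\Lambda$ and exponent $\alpha$, with $\Lambda$ large enough that $\tilde z_0 \in C$.

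The second step is the entropy estimate. Classical results (Kolmogorov–Tikhomirov) give
\[ n(C, L^\infty_{B_{R'}}, u) \lesssim u^{-d/\alpha} \qquad\text{for }u\in(0,1], \]
and Lemma~\ref{lemma:lipfenchel} transfers this same bound (up to constants) to $C^*$ on $B_{R'}$, so both the $C$ and $C^*$ entropy integrals appearing in the proof of Proposition~\ref{propErrormaps} are controlled by the same rate. Plugging this into the integral gives
\[ \frac{1}{\sqrt n}\int_{\delta/4}^{M'/L} (Lu)^{-d/(2\alpha)}\,\mathrm{d}u, \]
which, depending on whether $d/(2\alpha)<1$, $=1$, or $>1$ (equivalently $\alpha>d/2$, $=d/2$, $<d/2$), converges, diverges logarithmically, or grows as $\delta^{1-d/(2\alpha)}$.

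The final step is to optimise in $\delta$. In the regime $\alpha>d/2$ the integral is finite independently of $\delta$, so sending $\delta\to 0$ yields the parametric rate $n^{-1/2}$. At the critical exponent $\alpha=d/2$ we pick $\delta\sim 1/\sqrt n$ and pick up the $\log n$ factor from the integral evaluated down to $\delta/4$. In the under-smoothed regime $\alpha<d/2$ we balance $\delta \asymp \delta^{1-d/(2\alpha)}/\sqrt n$, giving $\delta\asymp n^{-\alpha/d} = n^{-(k+2)/d}$, which is the announced rate. The main obstacle I anticipate is not the entropy computation itself but the bookkeeping around the regularity step: one needs $z_0$ (and not merely $z_0$ restricted to $\mathrm{supp}(\mu)$) to admit a global $\lambda$-strongly convex, Hölder-$\alpha$ extension, and one needs the constants $M(R)$, $l$, $\lambda$ in Assumption~\ref{assump:C} to be compatible with those produced by \citet{gallouet2021regularity}; everything else is a direct plug-in.
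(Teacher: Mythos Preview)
Your proposal is correct and follows essentially the same route as the paper: invoke the regularity result of \citet{gallouet2021regularity} to get $z_0\in C^{k+2}$ and $\lambda$-strongly convex, extend it to all of $\mathbb{R}^d$, choose $C$ as a Hölder/$C^{k+2}$ ball intersected with the constraints of Assumption~\ref{assump:C}, plug the Kolmogorov--Tikhomirov entropy bound $n(C,L^\infty,u)\lesssim u^{-d/(k+2)}$ into Proposition~\ref{propErrormaps}, and optimise in $\delta$ exactly as you describe. The only points where the paper is more explicit than your sketch are the ones you flagged: the extension step is handled via Proposition~1.5 of \citet{azagra2019smooth} (extend $z_0-\lambda q$ as a $C^{k+2}$ convex function, then add $\lambda q$ back), and the fact that $\tilde z_0^*$ coincides with $z_0^*$ on $\mathrm{supp}(\nu)$ is deduced from $\nabla z_0$ being a diffeomorphism between the supports.
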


\begin{proof}
Using the Corollary 9 of \citet{gallouet2021regularity}, we can ensure that $z_0, z_0^*$ are $(k + 2)$-times continuously differentiable over the support of $\mu$ and $\nu$ respectively. Recalling that for all $x \in \text{supp}(\nu)$
\begin{equation}
    \nabla^2 z_0(x) = [\nabla^2 z_0^*(\nabla z_0(x))]^{-1} \, ,
\end{equation}
and using the fact that $\nabla z_0$ is a diffeomorphism between the support $\mu$ and $\nu$, we recover that $z_0$ is $\lambda$-strongly convex over $\text{supp}(\mu)$ where we defined 
\begin{equation}
    \frac{1}{\lambda} \coloneqq \underset{y \in \text{supp}(\nu)}{\sup} \|\nabla^2 z_0^*(y)\| \, .
\end{equation}
Now, recall that in order to apply our previous result, we need to globally bound the strong-convexity constant as well as controlling the sup norm over every ball. To achieve this, we can extend these potentials to the whole domain. Proposition 1.5 in \citet{azagra2019smooth} provides a $(k+2)$-times continuously differentiable convex extension $\tilde{g}_0$ of $z_0 - \lambda q$ on the whole domain $\mathbb{R}^d$. Defining $\tilde{z}_0 = \tilde{g}_0 + \lambda q$, we have that $\tilde{z}_0$ coincides with $z_0$ on $\text{supp}(\mu)$. Using again the diffeomorphism property of $\nabla z_0$ between $\text{supp}(\mu)$ and $\text{supp}(\nu)$, we have that $\tilde{z}_0^*$ coincides with $z_0^*$ on $\text{supp}(\nu)$. Now let us define 
\begin{align*}
    C = \{z \mid & \|z\|_{L_{B_r}^\infty} \leq \|\tilde{z}_0\|_{L_{B_r}^\infty}, \|\nabla^{k+2} z\|_{L^\infty_{B_r}} \leq  \|\nabla^{k+2} \tilde{z}_0\|_{L^\infty_{B_r}}, z\geq l,  \\
    & z \text{ is } \text{$\lambda$-strongly convex} \} \, ,
\end{align*}
where $l$ is the minimum of $\tilde{z}_0$. The set $C$ indeed meets Assumption \ref{assump:C} and Assumption \ref{assump:z_0} hence we can apply Prop. \ref{propErrormaps} which yields
\begin{equation}
    \mathbb{E}[d_{H^\circ}^\lambda(\hat{z}_C, z_0)^2] \lesssim \delta + \frac{1}{\sqrt{n}} \int_{\frac{\delta}{4}}^{\frac{M'}{L}} \sqrt{n(C, L^\infty_{B_{R'}}, Lu)} \mathrm{d} u \, .
\end{equation}
Finally, using \citet[Theorem 2.7]{vaart1996weak}, we have $n(C, L^\infty_{B_{R'}}, Lu) \lesssim u^{-\frac{d}{k+2}}$. If $\frac{k+2}{d} < 1/2$, take $\delta = n^{-\frac{k+2}{d}}$. For this choice of $\delta$,
\begin{align}
    \frac{1}{\sqrt{n}} \int_{\frac{\delta}{4}}^{\frac{M'}{L}} \sqrt{n(C, L^\infty_{B_{R'}}, Lu)} \mathrm{d} u & \lesssim \frac{1}{\sqrt{n}} (n^{-\frac{k+2}{d}})^{1 - \frac{d}{2(k+2)}} \\
    & \lesssim  \frac{1}{\sqrt{n}} n^{-\frac{2(k+2) -d}{2d}} \\
    & = n^{-\frac{k+2}{d}} \, .
\end{align}
If $\frac{k+2}{d} = 1/2$, take $\delta = \frac{1}{\sqrt{n}}$. For this choice of $\delta$, the integral is of order $\log(n)$ which yields the upper-bound
\begin{equation}
    \mathbb{E}[d_{H^\circ}^\lambda(\hat{z}_C, z_0)^2] \lesssim \frac{\log(n)}{\sqrt{n}} \, .
\end{equation}
Finally, if $\frac{k+2}{d} > 1/2$, taking $\delta=0$ yields
\begin{equation}
    \mathbb{E}[d_{H^\circ}^\lambda(\hat{z}_C, z_0)^2] \lesssim \frac{1}{\sqrt{n}} \, .
\end{equation}
\end{proof}

We see that we have two distinct regimes: a regime where the extra smoothness directly improves the rate of estimation and a highly smooth regime where the rate saturates at $1/\sqrt{n}$. In particular, we do not recover the asymptotic rate (with respect to the smoothness) $1/n$ which in known to be minimax in the case of balanced OT (see \citet{hutter2021Minimax}). We show in the next paragraph that this $1/\sqrt{n}$ rate can be improved under suitable assumptions on the metric entropy of $C$.

\subsection{Low metric entropy case}

When the metric entropy of $C$ slowly diverges, we can obtain faster rates than $1/\sqrt{n}$. The central argument is the localization: thanks to the strong convexity of the semi-dual, we can localize the empirical potential $\hat{z}$ in a certain neighborhood of the ground truth $z_0$. The next assumption allows to apply the stability result of the semi-dual with a $L^2$ control on $\hat{z} - z_0$ and $\hat{z}^* - z_0^*$; From the statistical point of view, it allows to employ localization arguments.
\begin{assumption}
The conjugate of the entropy $\varphi^*$ is strongly convex on every compact.
\end{assumption}

Hence, instead of controlling the global empirical processes
\begin{equation}
\label{eqGlobalProcess}
    W = \sup_{z \in C} ~ \langle z, \mu - \hat{\mu} \rangle \, ,
\end{equation}
and 
\begin{equation}
\label{eqGlobalProcess_conj}
    W^* = \sup_{z \in C} ~ \langle z^*, \nu - \hat{\nu} \rangle \, ,
\end{equation}
we simply must control the localized empirical processes
\begin{equation}
\label{eqLocalProcess}
    W(\tau) \coloneqq \sup_{z \in C\cap B^{\circ}(z_0, \tau)} \langle \phi^{*}(z - q) - \phi^{*}(z_0 - q), \mu - \hat{\mu} \rangle \, ,
\end{equation}
and 
\begin{equation}
\label{eqLocalProcess_conj}
    W^*(\tau) \coloneqq \sup_{z \in C\cap B^{\circ}(z_0, \tau)} \langle \phi^{*}(z^* - q) - \phi^{*}(z_0^* - q), \nu - \hat{\nu} \rangle \, ,
\end{equation}
where $\tau$ is a suitable radius and  $B^\circ(z_0, \tau)$ is the ball centered in $z_0$ of radius $\tau$ with respect to the $d_{H^\circ}^\lambda$ pseudo-distance. In the case where the metric entropy of $C$ grows too fast, the localized processes \eqref{eqLocalProcess} and \eqref{eqLocalProcess_conj} behave like the global processes \eqref{eqGlobalProcess} and \eqref{eqGlobalProcess_conj} and we cannot apply localization. However, when the metric entropy of $C$ is sufficiently low, the following lemma shows that $W(\tau)$ and $W^*(\tau)$ are upper bounded with high probability by $\frac{\tau^{1-\alpha/2}}{\sqrt{n}}$ with $2 > \alpha > 0$. 

\begin{lemma}
\label{lemmaLocalProcesses}
Under Assumptions 4-5, if we assume that there exists $(P_\mu, P_\nu)$ and $\alpha < 2$ such that for every $u \in \mathbb{R}_{\geq 0}$, $n(C, L^2(\mu), u) \leq P_{\mu} u^{-\alpha}$ and $n(C, L^2(\nu), u) \leq P_{\nu} u^{-\alpha}$, it holds with probability at least $1 - e^{-t}$ 
\begin{equation}
    \begin{cases}
W(\tau) \leq \frac{8 \sqrt{2P_{\mu}}}{(1 - \frac{\alpha}{2})\sqrt{n(L^1_{\phi^*})^{\alpha}}}(K\tau)^{1 - \alpha/2} + K\tau \sqrt{\frac{2t}{n}} + \frac{2 M(R)L^1_{\phi^*}}{n} \\
W^*(\tau) \leq \frac{8 \sqrt{2P_{\nu}}}{(1 - \frac{\alpha}{2})\sqrt{n(L^2_{\phi^*})^{\alpha}}}(K'\tau)^{1 - \alpha/2} + K'\tau \sqrt{\frac{2t}{n}} + \frac{2 M'(R)L^2_{\phi^*}}{n}  \, ,
\end{cases}
\end{equation}
where $L^1_{\phi^*}, L^2_{\phi^*}$ are defined in Equations \eqref{eqDefL^1} and \eqref{eqDefL^2} respectively and measure local lipschitz behaviors of $\varphi^*$, $M(R)$ is defined in Assumption \ref{assump:boundedness} and is a uniform bound over $B_R$ of the potentials in $C$, $M'(R)$ is defined in Lemma \ref{lemma:boundC^*} and is a uniform bound over $B_R$ of the conjugate of the potentials in $C$, and $K = K(R, M, \phi^*)$, $K'=K'(R, M, \phi^*, \lambda, l)$ are the embedding constants of $(C, L^2(\mu))$ and $(C^*, L^2(\nu))$ in $H^\circ$.
\end{lemma}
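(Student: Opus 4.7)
The plan is to bound each localized empirical process by combining a Bousquet-type concentration inequality with a Dudley entropy integral applied to a suitably localized function class. I describe the argument for $W(\tau)$; the bound on $W^*(\tau)$ follows by a symmetric argument, replacing $C$, $\mu$, $L^1_{\phi^*}$, $M(R)$, $K$ by $C^*$, $\nu$, $L^2_{\phi^*}$, $M'(R)$, $K'$, and using Lemma \ref{lemma:lipfenchel} together with the $L^2(\nu)$ entropy hypothesis to transfer covering numbers from $C^*$ back to $C$.

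First, I apply Bousquet's inequality to the centered class $\mathcal{F}_\tau = \{\phi^*(z-q) - \phi^*(z_0-q) : z \in C \cap B^\circ(z_0, \tau)\}$, which requires a sup-norm envelope and a variance bound. The envelope follows from Assumption \ref{assump:C} combined with the local Lipschitz constant $L^1_{\phi^*}$ of $\phi^*$ defined in \eqref{eqDefL^1}: each element of $\mathcal{F}_\tau$ is pointwise bounded on $\text{supp}(\mu) \subset B_R$ by $2 M(R) L^1_{\phi^*}$, which produces the additive term $2M(R)L^1_{\phi^*}/n$. For the variance, the localization $d_{H^\circ}^\lambda(z, z_0) \leq \tau$ controls $\|z - z_0\|_{L^2(\mu)}$ through the definition of the pseudo-distance, and composition with the Lipschitz map $\phi^*(\cdot - q)$ yields $\|f\|_{L^2(\mu)} \leq K\tau$ for every $f \in \mathcal{F}_\tau$, where $K$ is precisely the embedding constant of $(C, L^2(\mu))$ in $H^\circ$. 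Plugging these into Bousquet's inequality produces the $K\tau \sqrt{2t/n}$ subgaussian term.

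Second, I control $\mathbb{E}[W(\tau)]$ by standard symmetrization and Dudley's entropy integral,
\begin{equation*}
\mathbb{E}[W(\tau)] \lesssim \frac{1}{\sqrt{n}} \int_0^{K\tau} \sqrt{n(\mathcal{F}_\tau, L^2(\mu), u)}\, du \,,
\end{equation*}
where the integral is truncated at the $L^2(\mu)$-diameter $K\tau$ of $\mathcal{F}_\tau$. Using the $L^1_{\phi^*}$-Lipschitzness of $\phi^*(\cdot - q)$, any $v$-net of $C$ in $L^2(\mu)$ induces an $L^1_{\phi^*} v$-net of $\mathcal{F}_\tau$, so the polynomial hypothesis $n(C, L^2(\mu), v) \leq P_\mu v^{-\alpha}$ transfers to a polynomial covering estimate for $\mathcal{F}_\tau$ at scale $u$ involving $(L^1_{\phi^*})^\alpha u^{-\alpha}$. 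Since $\alpha < 2$ the integral converges, and evaluating $\int_0^{K\tau} u^{-\alpha/2}\, du = (K\tau)^{1-\alpha/2}/(1-\alpha/2)$ together with the remaining $\sqrt{P_\mu}$, powers of $L^1_{\phi^*}$ and $1/\sqrt{n}$ factors recovers the first term of the stated bound up to absolute constants.

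The main obstacle I anticipate is less analytical than one of bookkeeping: the Lipschitz constant $L^1_{\phi^*}$ plays two distinct roles (envelope, and entropy transfer in Dudley), while $K$ already encodes an implicit composition with $\phi^*$ through the embedding $L^2(\mu) \hookrightarrow H^\circ$, so one must verify that nothing is double counted and that the diameter used to truncate Dudley's integral is consistent with the variance plugged into Bousquet's inequality. Once these constants are aligned, the three terms in the statement are exactly the expected supremum, the subgaussian deviation, and the envelope contribution produced by Bousquet's inequality, and the analogous bound for $W^*(\tau)$ is obtained mechanically via the stated symmetric substitutions.
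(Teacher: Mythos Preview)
Your proposal is correct and follows essentially the same route as the paper: define the localized centered class $\mathcal{F}_\tau$, obtain the $L^2(\mu)$ diameter $K\tau$ from the embedding $L^2(\mu)\hookrightarrow H^\circ$ (valid because $\varphi^*$ is locally strongly convex), bound $\mathbb{E}[W(\tau)]$ by a Dudley integral truncated at $K\tau$ with the entropy of $C$ pulled back through the $L^1_{\phi^*}$-Lipschitz map, and then apply Bousquet's inequality with envelope $2M(R)L^1_{\phi^*}$; the $W^*(\tau)$ case is handled symmetrically via Lemma~\ref{lemma:boundC^*}. The only cosmetic difference is the order of presentation (you invoke Bousquet before Dudley, the paper does the reverse) and your explicit mention of Lemma~\ref{lemma:lipfenchel} for the conjugate side, which the paper leaves implicit.
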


\begin{proof}

The proof relies on the Lipschitz behavior of the Legendre transform that preserves the metric entropy of $C$ and on the Bousquet concentration inequality. We start by analyzing the term $W(\tau)$.

\paragraph{Term $W(\tau)$}
Let us denote $C_0 = \{\phi^{*}(z - q) - \phi^{*}(z_0 - q), z \in C\cap B^{\circ}(z_0, \tau)\}$. For $g \in C_0$ of the form $g = \phi^{*}(z - q) - \phi^{*}(z_0 - q)$ with $z \in C\cap B^{\circ}(z_0, \tau)$, we have the pointwise bound for all $x \in B_R$,
\begin{equation}
\label{eqPointWiseUB}
    |g(x)| \leq L_{\phi^*}^1 |z(x) - z_0(x)| \, ,
\end{equation}
where $L^1_{\phi^*} := \sup_{x \in [- M_1, M_1]} |\partial \phi^*(x) |$ with $M_1 = 2M(R) + R^2$ as in the previous proof. This implies $\| g \|_{L^2(\mu)} \leq L_{\phi^*}^1 \|z - z_0\|_{L^2(\mu)}$. Since we assumed $\phi^*$ strongly convex on every compact, there exists $K = K(R, M, \phi^*) > 0$ such that $\|z - z_0\|_{L^2(\mu)} \leq K d_{H^\circ}^\lambda(z, z_0) $ and in particular, all $g \in C_0$ verifies $\|g\|_{L^2(\mu)} \leq K \tau$. Hence, applying \citet[Theorem 16]{Luxburg04Distance}, we obtain for all $\frac{\delta}{4}\leq K \tau$
\begin{equation}
    \mathbb{E}[W(\tau)] \leq 2 \delta + \frac{4\sqrt{2}}{\sqrt{n}} \int_{\frac{\delta}{4}}^{K\tau} \sqrt{n(C_0, L^2(\mu), u)} \ud u \, .
\end{equation}
Again, taking $(g_1, g_2) \in C_0^2$ of the form $g_1 =  \phi^{*}(z_1 - q) - \phi^{*}(z_0 - q)$ and $g_2 =  \phi^{*}(z_2 - q) - \phi^{*}(z_0 - q)$ with $(z_1, z_2) \in (C\cap B^{\circ}(z_0, \tau))^2$, we have
\begin{equation}
    \| g_1 - g_2 \|_{L^2(\mu)} \leq L^1_{\phi^*} \|z_1 - z_2 \|_{L^2(\mu)} \, ,
\end{equation}
and in particular, we recover the upper-bound
\begin{equation}
     \mathbb{E}[W(\tau)] \leq 2 \delta + \frac{4\sqrt{2}}{\sqrt{n}} \int_{\frac{\delta}{4}}^{K\tau} \sqrt{n(C, L^2(\mu),  L^1_{\phi^*}u)} \ud u .
\end{equation}
Now, we assumed that for all $u \in \mathbb{R}^{+}$ we had the upper-bound, $n(C, L^2(\mu), u) \leq P_{\mu} u^{-\alpha}$ with $\alpha < 2$, we obtain taking $\delta = 0$ our final upper bound
\begin{equation}
    \mathbb{E}[W(\tau)] \leq \frac{4 \sqrt{2P_{\mu}}}{(1 - \frac{\alpha}{2})\sqrt{n(L^1_{\phi^*})^{\alpha}}}(K\tau)^{1 - \alpha/2} \, .
\end{equation}
There remains to bound the process $W(\tau)$ with high probability. We use for this the Bousquet concentration inequality.
\begin{lemma}
[Bousquet, see Theorem 26 in \citet{hutter2021Minimax}]
Let $\mathcal{F}$ be a class of functions such that for every $f \in \mathcal{F}$, $\|f\|_{L^2(\mu)}^2 \leq \sigma^2$ and $\|f\|_{L^\infty(\mu)} \leq M$, then for all $t>0$, we have with probability at least $1 - e^{-t}$
\begin{equation}
    \sup_{f \in F} \sqrt{n}|\langle f, \mu - \hat{\mu} \rangle | \leq 2 \mathbb{E}[\sup_{f \in F} \sqrt{n}|\langle f, \mu - \hat{\mu} \rangle |] + \sigma \sqrt{2t} + \frac{M}{\sqrt{n}}t \, .
\end{equation}
\end{lemma}

Applying this result to $W(\tau)$ yields that with probability at least $1-e^{-t}$,
\begin{equation}
    W(\tau) \leq \frac{8 \sqrt{2P_{\mu}}}{(1 - \frac{\alpha}{2})\sqrt{n(L^1_{\phi^*})^{\alpha}}}(K\tau)^{1 - \alpha/2} + K\tau \sqrt{\frac{2t}{n}} + \frac{2t M(R)L^1_{\phi^*}}{n} \, ,
\end{equation}
where we used the pointwise upper-bound \eqref{eqPointWiseUB} and where $M(R)$ is the constant such that $\forall z \in C, \|z\|_{L^\infty_{B_R}} \leq M(R)$. 

\paragraph{Term $W^*(\tau)$} We can apply the same reasoning as previously. Indeed, as shown in Lemma \ref{lemma:boundC^*}, there exists a constant $M'(R)$ such that for all $z \in C$, $\|z^*\|_{L^\infty_{B_R}} \leq M'(R)$. In particular, since the potentials $z^*$ are bounded, we can also leverage the local strong convexity of $\phi^*$ that yields a constant $K' = K'(R, M, \phi^*, \lambda, l) > 0$ such that for every $z \in C, \|(z-z_0)^*\|_{L^2(\nu)} \leq K' d_{H^\circ}^\lambda(z, z_0)$. Hence we recover that with probability at least $1 - e^{-t}$,
\begin{equation}
    W^*(\tau) \leq \frac{8 \sqrt{2P_{\nu}}}{(1 - \frac{\alpha}{2})\sqrt{n(L^2_{\phi^*})^{\alpha}}}(K'\tau)^{1 - \alpha/2} + K'\tau \sqrt{\frac{2t}{n}} + \frac{2t M'(R)L^2_{\phi^*}}{n} \, .
\end{equation}

\end{proof}

Then, the informal reasoning is as follows: if we take $\tau = d_{H^\circ}^\lambda(\hat{z}, z_0)$, we have the upper bound: $\tau^2 \leq W(\tau) + W^*(\tau) \leq \frac{\tau^{1 - \alpha/2}}{\sqrt{n}}$ which constrains $\tau$ as $\tau^2 \leq n^{-\frac{1}{1+\alpha/2}}$.
\begin{proposition}
\label{PropFastRates}
Under Assumptions 1-5, if we assume that there exists $(P_\mu, P_\nu)$ and $\alpha < 2$ such that for every $u \in \mathbb{R}_{\geq 0}$, $n(C, L^2(\mu), u) \leq P_{\mu} u^{-\alpha}$ and $n(C, L^2(\nu), u) \leq P_{\nu} u^{-\alpha}$ then $\forall n \geq 1$, 
\begin{equation}
    \mathbb{E}[d_{H^\circ}^\lambda(\hat{z}, z_0)^2] \lesssim n^{-\frac{1}{1 + \alpha/2}} \, ,
\end{equation}
where $\lesssim$ hides constants that do not depend on $n$.
\end{proposition}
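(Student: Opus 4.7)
The plan is a standard localization/peeling argument: combine the strong convexity of $J$ in the pseudo-distance $d_{H^\circ}^\lambda$ (Proposition \ref{propStability}) with the localized empirical-process control of Lemma \ref{lemmaLocalProcesses}, and close the resulting self-referential inequality by a dyadic peeling.

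First, from Proposition \ref{propStability} and the empirical optimality $\hat J(\hat z)\leq \hat J(z_0)$ (legitimate because under Assumption \ref{assump:z_0} the representative $\tilde z_0\in C$ is indistinguishable from $z_0$ when evaluated on $\hat\mu,\hat\nu$ and under the pseudo-distance $d_{H^\circ}^\lambda$), I obtain
\begin{equation*}
d_{H^\circ}^\lambda(\hat z,z_0)^2 \;\leq\; J(\hat z)-J(z_0) \;\leq\; (J-\hat J)(\hat z)-(J-\hat J)(z_0).
\end{equation*}
The right-hand side expands into exactly the increment of the processes defined in \eqref{eqLocalProcess} and \eqref{eqLocalProcess_conj} at $z=\hat z$, and is therefore bounded by $W(\tau)+W^*(\tau)$ as soon as $\tau\geq d_{H^\circ}^\lambda(\hat z,z_0)$.

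Next, I run a peeling over dyadic radii $\tau_k=2^k\tau_0$ for $k=0,\ldots,K$, where $K=O(\log n)$ is chosen so that $\tau_K$ dominates the a priori $d_{H^\circ}^\lambda$-diameter of $C$ (finite by Assumptions \ref{assump:compact}, \ref{assump:C} and Lemma \ref{lemma:boundC^*}). A union bound of Lemma \ref{lemmaLocalProcesses} at each $\tau_k$ with rescaled parameter $t'=t+\log(K+1)$ gives, with probability at least $1-e^{-t}$ and simultaneously for every $k\leq K$,
\begin{equation*}
W(\tau_k)+W^*(\tau_k) \;\leq\; c_1\,\tau_k^{\,1-\alpha/2}/\sqrt n \;+\; c_2\,\tau_k\sqrt{t'/n} \;+\; c_3\,t'/n,
\end{equation*}
with constants $c_i$ depending only on $P_\mu,P_\nu,K,K',L^1_{\phi^*},L^2_{\phi^*},M(R),M'(R),\alpha$ from the lemma.

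I then introduce the fixed-point scale $\tau^\star$ defined by $(\tau^\star)^{1+\alpha/2}=2c_1/\sqrt n$, so that $(\tau^\star)^2\asymp n^{-1/(1+\alpha/2)}$, and set $\tau_0=C_\star(\tau^\star+\sqrt{t'/n})$ for a sufficiently large absolute constant $C_\star$. Assume, on the high-probability event, that $d_{H^\circ}^\lambda(\hat z,z_0)>\tau_0$. Then there exists $k\geq 1$ with $\tau_{k-1}<d_{H^\circ}^\lambda(\hat z,z_0)\leq\tau_k$, and combining the first and second displays at radius $\tau_k$ together with $\tau_{k-1}=\tau_k/2$ yields
\begin{equation*}
\tau_k^2/4 \;\leq\; c_1\,\tau_k^{1-\alpha/2}/\sqrt n \;+\; c_2\,\tau_k\sqrt{t'/n} \;+\; c_3\,t'/n,
\end{equation*}
which rearranges into $\tau_k\lesssim \tau^\star+\sqrt{t'/n}$, contradicting the standing assumption once $C_\star$ is chosen large enough. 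This gives the sub-exponential tail $\mathbb{P}\!\left(d_{H^\circ}^\lambda(\hat z,z_0)^2\geq C(\tau^\star)^2+Ct'/n\right)\leq e^{-t}$, and integrating in $t$ (while absorbing the $\log\log n$ factor coming from $K$ in $\lesssim$) yields $\mathbb{E}[d_{H^\circ}^\lambda(\hat z,z_0)^2]\lesssim n^{-1/(1+\alpha/2)}$, as claimed.

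The main obstacle is the peeling step: the inequality mixes the scale-free noise $\tau^{1-\alpha/2}/\sqrt n$ (which carries the rate) with the linear Bousquet deviation $\tau\sqrt{t/n}$, and one must check that the latter does not destroy the localization before the fixed point is reached. This is precisely what makes the hypothesis $\alpha<2$ structural, and what produces the exponent $1/(1+\alpha/2)$. Everything else, notably Steps 1, 2 and the final integration of the tail, is bookkeeping once the constants from Lemma \ref{lemmaLocalProcesses} are tracked.
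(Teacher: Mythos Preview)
Your proof is correct and reaches the same rate, but the localization mechanism differs from the paper's. You apply the stability estimate directly to $\hat z$ and then run a dyadic peeling over shells $\tau_k=2^k\tau_0$, paying a union bound over $O(\log n)$ scales (hence the harmless $\log\log n$ that you absorb). The paper instead uses a convex interpolation: for each fixed $\tau$ it sets $s=\tau/(\tau+d_{H^\circ}^\lambda(\hat z,z_0))$ and works with $\hat z_s=(1-s)z_0+s\hat z$, which satisfies $d_{H^\circ}^\lambda(\hat z_s,z_0)\leq\tau$ deterministically; convexity of $\hat J$ gives $\hat J(\hat z_s)-\hat J(z_0)\leq s(\hat J(\hat z)-\hat J(z_0))\leq 0$, so the same decomposition yields $d_{H^\circ}^\lambda(\hat z_s,z_0)^2\leq W(\tau)+W^*(\tau)$ for \emph{every} $\tau$ without any peeling. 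On the set $\{\tau:\,d_{H^\circ}^\lambda(\hat z,z_0)\geq\tau\}$ one has $d_{H^\circ}^\lambda(\hat z_s,z_0)\geq\tau/2$, and the self-bounding inequality $\tau^2/4\leq W(\tau)+W^*(\tau)$ is then solved directly. The interpolation trick buys a slightly cleaner argument with no union bound; your peeling is the more textbook route and has the advantage of making the passage from a fixed-$\tau$ concentration bound to a random radius fully explicit.
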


\begin{proof}

For $\tau > 0$, define $s=\frac{\tau}{\tau +  d_{H^\circ}^\lambda(\hat{z}, z_0)}$ and $\hat{z}_s = (1-s)z_0 + s \hat{z}$. By local strong convexity of $J$, we have 
\begin{equation}
     d_{H^\circ}^\lambda(\hat{z}_s, z_0)^2 \leq J(\hat{z}_s) - J(z_0) \, .
\end{equation}
Let us decompose the right hand side as $J(\hat{z}_s) - \hat{J}(\hat{z}_s) - (J(z_0) - \hat{J}(z_0)) + \hat{J}(\hat{z}_s) - \hat{J}(z_0)$. By convexity of $\hat{J}$, the last term can be upper-bounded by $s\hat{J}(\hat{z}) + (1-s)\hat{J}(z_0) - \hat{J}(z_0) = s(\hat{J}(\hat{z}) - \hat{J}(z_0))$. Since $\hat{z}$ is the minimizer of the empirical semi-dual, we have in particular that $s(\hat{J}(\hat{z}) - \hat{J}(z_0)) \leq 0$ which gives
\begin{align*}
     d_{H^\circ}^\lambda(\hat{z}_s, z_0)^2 & \leq  J(\hat{z}_s) - \hat{J}(\hat{z}_s) - (J(z_0) - \hat{J}(z_0))
     \\
     & = \langle \phi^*(\hat{z}_s - q) -  \phi^*(z_0 - q), \mu - \hat{\mu}\rangle + \langle \phi^*(\hat{z}_s^* - q) -  \phi^*(z_0^* - q), \nu - \hat{\nu}\rangle \, .
\end{align*}
Now, since $d_{H^\circ}^\lambda(\hat{z}_s, z_0) = \frac{\tau d_{H^\circ}^\lambda(\hat{z}, z_0)}{\tau + d_{H^\circ}^\lambda(\hat{z}, z_0)} \leq \tau$, we recover in the end $d_{H^\circ}^\lambda(\hat{z}_s, z_0)^2 \leq W(\tau) + W^*(\tau) $.

Let us now consider $A=\{\tau, d_{H^\circ}^\lambda(\hat{z}, z_0) \geq \tau \}$. We wish to recover an upper-bound on $A$. Remark that $A = \{\tau, d_{H^\circ}^\lambda(\hat{z}_s, z_0) \geq \frac{\tau}{2} \}$. In particular, every $\tau \in A$ verifies with probability at least $1 - e^{-t}$
\begin{equation}
    \frac{\tau^2}{4} \leq \kappa \frac{\tau^{1 - \alpha/2}}{\sqrt{n}} + (K + K')\tau\sqrt{\frac{2t}{n}} + \frac{t\kappa'}{n} \, , 
\end{equation}
where $\kappa$ and $\kappa'$ are given in Lemma \ref{lemmaLocalProcesses} defined as 
\begin{equation}
    \begin{cases}
    \kappa = \frac{8\sqrt{2}}{(1-\frac{\alpha}{2})}\biggl[\frac{\sqrt{P_\mu}K^{1-\alpha/2}}{(L^1_{\varphi^*})^\frac{\alpha}{2}} + \frac{\sqrt{P_\nu}(K')^{1-\alpha/2}}{(L^2_{\varphi^*})^\frac{\alpha}{2}}\biggr] \\
    \kappa' = 2(M(R) L^1_{\varphi^*} + M'(R) L^2_{\varphi^*}) \, .
    \end{cases}
\end{equation}
. Let $A_n = \{\tau \in A, \tau \geq \frac{1}{\sqrt{n}} \}$. For $\tau \in A_n$, we have 
\begin{equation}
    \frac{\tau^2}{4} \leq \kappa \frac{\tau^{1 - \alpha/2}}{\sqrt{n}} + (K + K')\tau\sqrt{\frac{2t}{n}} + \frac{t \kappa'\tau}{\sqrt{n}} \, .
\end{equation}
Assuming that $t\geq 1$, we have two cases
\paragraph{Case 1} If $\tau \leq 1$, we have
\begin{equation}
    \frac{\tau^2}{4} \leq \frac{t \eta \tau^{1 - \alpha/2}}{\sqrt{n}} \, ,
\end{equation}
where $\eta = (\kappa + \kappa' + \sqrt{2}(K + K') )$ and we recover $\tau \leq \frac{(4\eta t)^{\frac{1}{1 + \alpha/2}}}{n^{\frac{1}{2+\alpha}}}$.
\paragraph{Case 2} If $\tau \geq 1$, we have
$\frac{\tau^2}{4} \leq \frac{t \eta \tau}{\sqrt{n}}$ \textit{i.e.} $\tau \leq \frac{4t \eta}{\sqrt{n}}$. 
\\\\
In any case, for $t\geq 1$, we have with probability at least $1-e^{-t}$
\begin{equation}
    \sup(A) \leq \frac{(4\eta' t)^{\frac{1}{1 + \alpha/2}} + (4\eta' t) }{n^{\frac{1}{2+\alpha}}} \, ,
\end{equation}
where we defined $\eta' = \max(\eta, 1)$. Now, by definition of $A$, we have for all $\epsilon > 0$, $d_{H^\circ}^\lambda(\hat{z}, z_0) \leq \sup(A) + \epsilon$. Taking $\epsilon \to 0$ gives that with probability at least $1 - e^{-t}$, for $t\geq 1$
\begin{align}
    d_{H^\circ}^\lambda(\hat{z}, z_0) & \leq \frac{(4\eta' t)^{\frac{1}{1 + \alpha/2}} + (4\eta' t) }{n^{\frac{1}{2+\alpha}}} \\
    & \leq \frac{8\eta' t}{n^{\frac{1}{2+\alpha}}} \, .
\end{align}
And in particular, $d_{H^\circ}^\lambda(\hat{z}, z_0)^2 \leq \frac{64(\eta')^2t^2}{n^{\frac{1}{1 + \alpha/2}}}$ with probability at least $1-e^{-t}$ for $t\geq 1$. We denote $X$ the random variable $d_{H^\circ}^\lambda(\hat{z}, z_0)^2$. Since $X$ is nonnegative almost surely, we can apply Fubini's formula
\begin{equation}
    \mathbb{E}[X] = \int_{0}^\infty P(X>u) \ud u \, .
\end{equation}
Let us make the change of variable $u = \frac{64(\eta')^2t^2}{n^{\frac{1}{1 + \alpha/2}}}$,
\begin{equation*}
    \mathbb{E}[X] = \frac{128(\eta')^2}{n^{\frac{1}{1 + \alpha/2}}} \biggl( \int_0^1 t P(X>  \frac{64(\eta')^2t^2}{n^{\frac{1}{1 + \alpha/2}}}) \ud t + \int_1^\infty t P(X>  \frac{64(\eta')^2t^2}{n^{\frac{1}{1 + \alpha/2}}}) \ud t \biggr)\, .
\end{equation*}
The integrand in the first term is upper-bounded by $1$ and the integrand on the second term is upper bounded by $te^{-t}$. Hence we obtain
\begin{align*}
    \mathbb{E}[d_{H^\circ}^\lambda(\hat{z}, z_0)^2] & \leq \frac{128(\eta')^2}{n^{\frac{1}{1 + \alpha/2}}} (1 + \int_{1}^\infty t e^{-t} dt ) \\
    & = \frac{128(1 + 2e^{-1})(\eta')^2}{n^{\frac{1}{1 + \alpha/2}}} \, .
\end{align*}
\end{proof}

An immediate consequence of this result is that we can improve the rate derived in Corollary \ref{corolLowSmooth} and recover the asymptotic behavior in $1/n$ when the smoothness grows.
\begin{corollary}
\label{corolHighSmooth}
Assume that $\mu$ and $\nu$ have compact and convex support with densities $(f,g)$ bounded away from zero and infinity and assume that $\varphi^*$ is strongly convex on every compact. If $(f,g)$ are $k$-times continuously differentiable with $k + 2 > d/2$ then, denoting $z_0$ an optimal unbalanced OT potential, there exists $C$ such that the empirical potential $\hat{z}_C$ verifies
\begin{equation}
    \mathbb{E}[d_{H^\circ}^\lambda(\hat{z}_C, z_0)^2] \lesssim n^{-\frac{1}{1 + \frac{d}{2(k +2)}}} \, .
\end{equation}
\end{corollary}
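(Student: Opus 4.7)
The plan is to reduce the statement to a direct application of Proposition \ref{PropFastRates}, using the same construction of the class $C$ as in the proof of Corollary \ref{corolLowSmooth} and then verifying the metric entropy hypothesis required by Proposition \ref{PropFastRates} with the correct exponent $\alpha$.

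First, I would invoke Corollary 9 of \citet{gallouet2021regularity} exactly as in Corollary \ref{corolLowSmooth} to conclude that both $z_0$ and $z_0^\star$ are $(k+2)$-times continuously differentiable on $\text{supp}(\mu)$ and $\text{supp}(\nu)$ respectively, and to extract a global strong convexity constant $\lambda$ via $1/\lambda \coloneqq \sup_{y \in \text{supp}(\nu)} \|\nabla^2 z_0^\star(y)\|$. Using the $C^{k+2}$ convex extension of Proposition 1.5 in \citet{azagra2019smooth}, I would produce $\tilde z_0$ that coincides with $z_0$ on $\text{supp}(\mu)$, is $\lambda$-strongly convex globally, and whose conjugate coincides with $z_0^\star$ on $\text{supp}(\nu)$. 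Then I would define exactly the same class
\begin{equation*}
C = \{ z \mid \|z\|_{L^\infty_{B_r}} \leq \|\tilde z_0\|_{L^\infty_{B_r}}, \ \|\nabla^{k+2} z\|_{L^\infty_{B_r}} \leq \|\nabla^{k+2}\tilde z_0\|_{L^\infty_{B_r}}, \ z \geq l, \ z \text{ is $\lambda$-strongly convex} \} \, ,
\end{equation*}
which satisfies Assumptions \ref{assump:compact}--\ref{assump:C} and contains $\tilde z_0$.

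Next, I would verify the metric entropy hypothesis of Proposition \ref{PropFastRates}. By \citet[Theorem 2.7.1]{vaart1996weak} applied on $B_{R'}$, the covering numbers of $C$ with respect to $\|\cdot\|_{L^\infty_{B_{R'}}}$ satisfy $n(C, L^\infty_{B_{R'}}, u) \lesssim u^{-\frac{d}{k+2}}$. Since $\mu$ and $\nu$ have bounded densities supported on $B_R \subset B_{R'}$, we have $\|\cdot\|_{L^2(\mu)} \lesssim \|\cdot\|_{L^\infty_{B_{R'}}}$ and similarly for $\nu$, so
\begin{equation*}
n(C, L^2(\mu), u) \leq P_\mu u^{-\alpha}, \qquad n(C, L^2(\nu), u) \leq P_\nu u^{-\alpha}, \qquad \alpha = \frac{d}{k+2} \, .
\end{equation*}
The assumption $k+2 > d/2$ gives $\alpha < 2$, which is precisely the regime covered by Proposition \ref{PropFastRates}.

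Finally, since Assumption 5 (strong convexity of $\varphi^\star$ on every compact) is in the hypotheses of the corollary, I can apply Proposition \ref{PropFastRates} directly and obtain
\begin{equation*}
\mathbb{E}[d_{H^\circ}^\lambda(\hat z_C, z_0)^2] \lesssim n^{-\frac{1}{1+\alpha/2}} = n^{-\frac{1}{1 + \frac{d}{2(k+2)}}} \, ,
\end{equation*}
which is the claimed bound. The only non-routine step is checking that the metric entropy on $L^2(\mu), L^2(\nu)$ scales polynomially with the expected exponent $d/(k+2)$; this is the main obstacle but it follows cleanly from the uniform boundedness of the densities combined with the classical smooth class covering estimate.
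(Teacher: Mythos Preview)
Your proposal is correct and follows essentially the same route as the paper: construct the class $C$ exactly as in Corollary~\ref{corolLowSmooth} via the regularity result of \citet{gallouet2021regularity} and the Azagra extension, bound the metric entropy of $C$ by $u^{-d/(k+2)}$ using \citet{vaart1996weak}, note that $k+2>d/2$ gives $\alpha=d/(k+2)<2$, and apply Proposition~\ref{PropFastRates}. Your write-up is in fact slightly more careful than the paper's, since you explicitly pass from the $L^\infty$ entropy bound to the $L^2(\mu)$ and $L^2(\nu)$ bounds via the embedding, whereas the paper states the $L^2$ entropy bound directly.
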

\begin{proof}
As in the proof of Corollary \ref{corolLowSmooth}, the original potential $z_0$ is $\lambda$-strongly convex and $(k+2)$-times continuously differentiable over $\text{supp}(\mu)$. It can be extended to $\tilde{z}_0$ that is also $\lambda$-strongly convex and $(k+2)$-times continuously differentiable but over the whole domain $\mathbb{R}^d$. The function $\tilde{z}_0$ is such that it coincides with $z_0$ on $\text{supp}(\mu)$ and its conjugate $\tilde{z}_0^\star$ coincides with $z_0^\star$ on $\text{supp}(\nu)$. We define the set 
\begin{align*}
    C = \{z \mid & \|z\|_{L_{B_r}^\infty} \leq \|\tilde{z}_0\|_{L_{B_r}^\infty}, \|\nabla^{k+2} z\|_{L^\infty_{B_r}} \leq  \|\nabla^{k+2} \tilde{z}_0\|_{L^\infty_{B_r}}, z\geq l,  \\
    & z \text{ is } \text{$\lambda$-strongly convex} \} \, ,
\end{align*}
where $l$ is the minimum of $\tilde{z}_0$. Theorem 2.7 of \citet{vaart1996weak} ensures that the metric entropy at scale $u$ of $C$ with respect to $L^2(\mu)$ and $L^2(\nu)$ is upper bounded by $u^{-\frac{k+2}{d}}$. Hence we can apply Prop. \ref{PropFastRates} and obtain 
\begin{equation}
    \mathbb{E}[d_{H^\circ}^\lambda(\hat{z}_C, z_0)^2] \lesssim n^{-\frac{1}{1 + \frac{d}{2(k +2)}}} \, .
\end{equation}
\end{proof}

\begin{figure}
    \centering
    \includegraphics[scale=0.4]{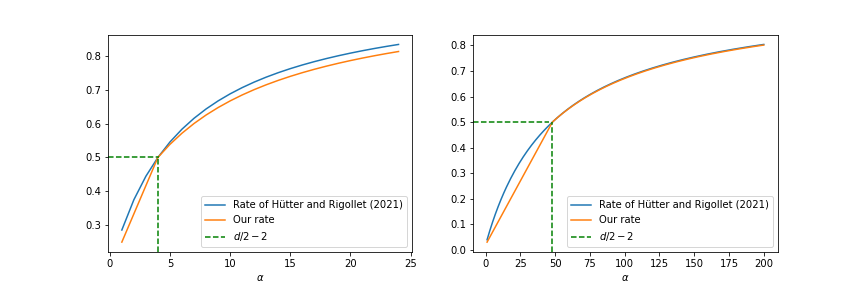}
    \caption{Comparison of our rates against the rates of Hutter and Rigollet (2021): on the left for $d=12$ and on the right for $d=100$.}
    \label{fig:my_label}
\end{figure}

Hence, using Corollary \ref{corolLowSmooth} and Corollary \ref{corolHighSmooth}, we obtain a rate of $n^{-\frac{\alpha+2}{d}}$ when $\alpha + 2 < d/2$ and $n^{-\frac{1}{1 + \frac{d}{2(\alpha+2)}}}$ when $\alpha + 2 > d/2$; note that we continuously transition from one rate to another when $\alpha + 2 = d/2$ where we recover (up to log factor) the parametric rate $1/\sqrt{n}$. Even though the fast rates do not encompass the balanced case as they require $\varphi^*$ to be at least locally strongly convex, we still compare our rates to the ones of \citet{hutter2021Minimax}. Under the same assumptions, they propose estimators that achieve a rate in $n^{-\frac{\alpha + 1}{\alpha + d/2}}$. As shown in Fig. 1 for the case $d=100$, their rate is faster for any $\alpha > 0$ yet when we transition in the highly smooth regime $\alpha + 2 > d/2$, our rate closely matches theirs. This discrepancy is due to the fact that we have no bias in our model \textit{i.e.} we assumed $z_0 \in C$. On their side, \citet{hutter2021Minimax} fixed $C$ to be a finite wavelet basis that does not necessarily contain $z_0$. In particular, they improve the bias variance trade-off on two levels: in the low smooth regime, they can benefit the acceleration given by the localization as they choose a small class of functions and for the same reason, in the highly smooth regime, they better leverage the localization.

\bibliography{references.bib}

\end{document}